\newtheorem{theorem}{Theorem}
\newtheorem{prop}[theorem]{Proposition}
\newtheorem{lemma}[theorem]{Lemma}
\newtheorem{coro}[theorem]{Corollary}
\newtheorem{conj}[theorem]{Conjecture}
\newcommand\size[1] {\left|{#1}\right|}
\newcommand\Setx[1] {\left\{{#1}\right\}}
\newcommand\map[3] {{#1}:\,{#2}\to{#3}}
\newcommand{\sm}{\setminus}
\newcommand{\AAA}{\mathcal A}
\newcommand{\BB}{\mathcal B}
\newcommand{\FF}{\mathcal F}
\title{\textbf{10-tough chordal graphs are
    Hamiltonian}\footnote{Research supported by project GA14-19503S of
    the Czech Science Foundation. The work of the first author is
    supported by project LO1506 of the Czech Ministry of Education,
    Youth and Sports.}}%
\author{Adam Kabela\thanks{Department of Mathematics and European
    Centre of Excellence NTIS (New Technologies for the Information
    Society), University of West Bohemia, Pilsen, Czech
    Republic. Email: \texttt{kabela@kma.zcu.cz}.}%
  \and Tom\'{a}\v{s} Kaiser\thanks{Department of Mathematics,
    Institute for Theoretical Computer Science (CE-ITI), and European
    Centre of Excellence NTIS (New Technologies for the Information
    Society), University of West Bohemia, Pilsen, Czech
    Republic. Email: \texttt{kaisert@kma.zcu.cz}.}}
\date{}
\begin{document}
\maketitle

\begin{abstract}
  Chen et al. proved that every $18$-tough chordal graph has a
  Hamilton cycle [Networks 31 (1998), 29-38]. Improving upon their
  bound, we show that every $10$-tough chordal graph is Hamiltonian
  (in fact, Hamilton-connected). We use Aharoni and Haxell's
  hypergraph extension of Hall's Theorem as our main tool.
\end{abstract}


\section{Introduction}
\label{sec:intro}

We study Hamilton cycles and toughness in chordal graphs. Recall
that following Chv\'{a}tal~\cite{chva}, the \emph{toughness} of a
graph $G$ is the minimum, taken over all separating sets $S$ of
vertices of $G$, of the ratio of $|S|$ to the number of components of
$G - S$. If $G$ is complete, the toughness is defined to be $\infty$. We
say that a graph is \emph{$t$-tough} if its toughness is at least
$t$. It is easy to observe that Hamiltonian graphs are $1$-tough. In the
reverse direction, Chv\'{a}tal~\cite{chva} conjectured the following:

\begin{conj}\label{conj:ch}
  There exists $t_0$ such that every $t_0$-tough graph
  (on at least $3$ vertices) is Hamiltonian.
\end{conj}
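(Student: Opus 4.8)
The plan is to pin down a finite threshold $t_0$ and prove that every $t_0$-tough graph on at least three vertices contains a Hamilton cycle. As calibration I note that the construction of Bauer, Broersma and Veldman yields $(9/4-\eps)$-tough non-Hamiltonian graphs for every $\eps>0$, so necessarily $t_0\ge 9/4$ and I would not aim below that value. The first, entirely routine, step is to record the consequences of toughness: a non-complete $t$-tough graph $G$ is $2t$-connected (a minimum cutset leaves at least two components, so $t\le\kappa(G)/2$), and therefore has minimum degree at least $2t$. Thus for large $t$ the graph is highly connected and locally dense. The entire difficulty is that these are global connectivity facts, whereas Hamiltonicity demands a coherent spanning structure that toughness does not obviously supply.

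The engine I would use is the longest-cycle method in the spirit of P\'{o}sa. Suppose $G$ is $t$-tough but not Hamiltonian, let $C$ be a longest cycle, and fix a component $H$ of $G-V(C)$. Standard longest-cycle arguments show that no two attachment vertices of $H$ are consecutive on $C$, that the set $A$ of attachments separates $H$ from the successors $\{x^{+}:x\in A\}$ taken along $C$, and that no successor is adjacent to $H$ (otherwise $C$ could be lengthened). The aim of this step is to enlarge $A$ into a separating set $S\subseteq V(C)$ whose removal leaves one component for $H$ and one for each arc it forces apart, and then to compare $|S|$ with the number of components so as to contradict $t$-toughness once $t\ge t_0$. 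The value of $t_0$ is read off from making this count tight, and the count is delicate precisely because chords of $C$ may glue the arcs back together.

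To force the count through I would bring in the paper's main tool, the Aharoni--Haxell hypergraph extension of Hall's Theorem, exactly in the role it plays in the chordal case. The components omitted by $C$ and the reservoirs of available detour-chords define a system of demands; a hypergraph matching would simultaneously reroute $C$ through every omitted piece and produce a longer cycle, the desired contradiction. Concretely, one builds a hypergraph $\HH$ whose vertices are the omitted pieces and whose edges encode feasible detours, verifies the Aharoni--Haxell width condition using toughness-driven expansion of neighbourhoods, and extracts a family of disjoint detours from the resulting matching.

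The main obstacle, and the reason Conjecture~\ref{conj:ch} remains open, is precisely the verification of that width condition in the absence of chordality. For a chordal graph a clique tree organizes the cutsets so that the relevant neighbourhoods are cliques and the Aharoni--Haxell hypothesis collapses to a clean toughness inequality; for a general graph there is no such decomposition, the feasible-detour hypergraph can be wildly irregular, and I do not expect the global expansion needed for a matching to follow from toughness alone. I would therefore treat this as the crux rather than the rotation machinery: either isolate a weaker structural invariant that toughness does control (a bounded-width tree decomposition, or a fractional relaxation of the matching amenable to an LP-duality argument), or conclude that the bare conjecture may fail and retreat to establishing a finite $t_0$ only within structured subclasses. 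Essentially all of the genuine work lies in closing this global-to-local gap.
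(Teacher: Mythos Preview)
The statement you are attempting to prove is Conjecture~\ref{conj:ch}, Chv\'atal's toughness conjecture. The paper does not prove it; immediately after stating it the authors write that it ``is still open'' and cite the Bauer--Broersma--Veldman $9/4$ lower bound as the best known obstruction. The paper's contribution is Theorem~\ref{t:main}, the restricted statement for \emph{chordal} graphs with $t_0=10$, and all of the machinery (tree representations, overspan graphs, the Aharoni--Haxell matching condition, Lemmas~\ref{l:ham}, \ref{l:match ham}, \ref{l:konig}, \ref{l:tree}, \ref{l:disc}) is built specifically around a clique-tree decomposition that general graphs do not possess.

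Your proposal is not a proof but a research outline, and you say so yourself: you identify the Aharoni--Haxell width condition as the crux, observe that chordality is exactly what makes it verifiable in the paper, and then concede that ``in the absence of chordality \dots\ I do not expect the global expansion needed for a matching to follow from toughness alone.'' That is an honest assessment of the state of the art, but it means the argument terminates at precisely the point where a proof would have to begin. The longest-cycle/P\'osa rotation setup you describe is standard and correct as far as it goes; the gap is that nothing in the proposal bridges from high toughness to the hypergraph matching hypothesis for arbitrary graphs, and no candidate value of $t_0$ is ever pinned down. Since the conjecture is genuinely open, no such bridge is currently known.
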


Conjecture~\ref{conj:ch} is still open. The best available lower bound
is due to Bauer, Broersma and Veldman~\cite {74} who constructed
non-Hamiltonian graphs with toughness arbitrarily close to $\tfrac94$.  

Partial results related to Chv\'{a}tal's conjecture have been obtained
in various restricted classes of graphs (see the survey~\cite{surv}
for details). A number of these results concern chordal graphs.
For instance, it is known that 
(with the exception of $K_1$ and $K_2$) every
chordal planar graph of toughness
more than $1$ is Hamiltonian~\cite{chpl}, and so is every $1$-tough
interval graph~\cite{inte1} or every $\frac{3}{2}$-tough split
graph~\cite{spli}. All of these results are tight.

Non-Hamiltonian chordal graphs with toughness arbitrarily close to
$\tfrac74$ were constructed in~\cite{74}. On the other hand,
Chen et al.~\cite{18} showed that every $18$-tough chordal graph
is Hamiltonian. In this paper, we improve the bound as follows:

\begin{theorem}\label{t:main}
  Every $10$-tough chordal graph on at least $3$ vertices is Hamiltonian.
\end{theorem}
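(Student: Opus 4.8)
The plan is to work with the standard representation of a chordal graph $G$ as the intersection graph of a family of subtrees of a tree. Concretely, I take the \emph{clique tree} $T$ whose nodes are the maximal cliques of $G$, with the property that for each vertex $v$ the set of maximal cliques containing $v$ induces a subtree $T_v$ of $T$, two vertices being adjacent precisely when their subtrees meet. I also use the basic fact that every minimal separator of a chordal graph is a clique, so that each edge $e$ of $T$ corresponds to a clique separator $S_e$ equal to the intersection of its two endpoint-cliques. The target is to produce a cyclic ordering of $V(G)$ in which consecutive vertices lie in a common maximal clique; such an ordering is exactly a Hamilton cycle of $G$. (To reach the stronger Hamilton-connected conclusion advertised in the abstract one prescribes the two ends and runs the same machinery on a path skeleton; here I focus on the cycle.)

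First I would fix a closed-walk \emph{skeleton} in $T$: a closed walk $W$ visiting every node and traversing each edge $e$ an even number $c_e$ of times, with $2\le c_e\le 2\size{S_e}$. The bounds are forced by the Hamilton cycle itself, since removing the clique $S_e$ splits $G$ into the two sides of $e$, the cycle can cross between them at most $2\size{S_e}$ times, and it must cross at least twice whenever both sides are nonempty; so choosing $W$ amounts to selecting feasible crossing multiplicities together with a connected routing. The Hamilton cycle is then assembled from $W$ by replacing each visit to a clique $Q$ with a path through the vertices assigned to that visit, and by realizing each traversal of an edge $e$ through a genuine vertex of $S_e$ that serves as the junction between the two clique-parts it separates.

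The heart of the argument is the assignment of the vertices of $G$ to the slots of this skeleton so that every vertex is used exactly once, each junction across $e$ is a vertex of $S_e$, and consecutive slots receive adjacent vertices. I would phrase this as the problem of finding a \emph{system of disjoint representatives} for a family of hypergraphs, one hypergraph per slot, whose edges are the admissible (size at most $r$) sets of vertices that can fill that slot, and then invoke Aharoni and Haxell's hypergraph Hall theorem: if for every subfamily indexed by a set $I$ the union of the corresponding hypergraphs carries a matching of more than $r(\size{I}-1)$ pairwise disjoint edges, then the desired disjoint representatives exist. Since the cycle edges have two endpoints, the relevant uniformity $r$ is a small constant, so the theorem converts a purely local matching condition into the global existence of the Hamilton cycle.

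The main obstacle, and the place where toughness and the constant $10$ enter, is verifying the matching hypothesis of the Aharoni--Haxell theorem. I would argue by contrapositive: a subfamily $I$ violating the matching bound yields a collection of vertices that cannot be represented disjointly, and by the subtree/separator structure this deficiency localizes to the union $S$ of a bounded number of clique separators attached to the cliques indexed by $I$. Removing $S$ then exhibits many components of $G-S$ relative to $\size{S}$, contradicting $10$-toughness. Getting the numerology to close at $t=10$ rather than Chen et al.'s $18$ is precisely the delicate step: one must track the uniformity factor $r$ of the hypergraph theorem, the factor arising because a separator is shared by the two sides it splits, and the multiplicities $c_e$ of the skeleton, and show their product is dominated by $10$. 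A secondary difficulty is guaranteeing that the representatives assemble into a \emph{single} spanning cycle rather than a disjoint union of cycles; I expect to handle this by building connectivity into the choice of $W$ (keeping it a connected closed walk) and, if needed, interleaving the connectivity requirement with the representative selection so that the resulting $2$-regular spanning subgraph is forced to be connected.
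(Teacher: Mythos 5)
Your outline matches the paper's strategy in spirit---subtree representation of the chordal graph, a closed walk in the tree whose crossings are filled in by a system of disjoint representatives, Aharoni--Haxell to reduce to a local matching condition, and a toughness contradiction when that condition fails---but the two steps you defer are exactly the ones that carry the proof, and as set up your framework would not close them. First, the passage from ``some subfamily violates the matching bound'' to ``there is a small vertex set $S$'' is not automatic: a graph with a small maximum matching need not have a small vertex cover, so the deficiency does not ``localize'' for free. The paper gets this by proving a K\H{o}nig-type equality $\nu=\tau$ for the union of the crossing graphs, which holds because (after discarding looped vertices) that union admits a homomorphism onto a tree and is therefore bipartite. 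Nothing in your sketch plays this role, and without it the contrapositive argument stalls at the very first step. Second, even granted a small $S$, you assert that $G-S$ has many components ``relative to $\size{S}$'' without saying where the components come from. Removing a union of clique separators can leave very few components. The paper manufactures witnesses in advance: it fixes a maximal independent set $I$ of vertices whose subtrees are paths through degree-$\le 2$ nodes of the representation tree, and a counting lemma on trees shows that enough members of $I$ land in pairwise distinct components of $G-S$. Some structure of this kind has to be built into the representation before the toughness computation can begin; it is absent from your proposal.

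A further concrete problem is your choice of skeleton. Allowing each tree edge $e$ to be crossed $c_e$ times with $c_e\le 2\size{S_e}$ makes the number of representative slots per edge unbounded (since $\size{S_e}$ is unbounded), so the Aharoni--Haxell deficiency $r(\size{I}-1)$ cannot be compared to the size of a separator with a fixed constant like $10$; the paper instead arranges matters so that every tree edge is crossed exactly twice (an Euler tour of a suitably contracted ``base tree''), with one of the two crossings of a red edge supplied by an $I$-path for free. This also dissolves your final worry about obtaining a single cycle rather than a $2$-factor: a single closed Euler tour yields one cyclic sequence of subtrees, with the non-junction vertices inserted at a node of the tour contained in their subtree. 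In short, the toolbox is right, but the proposal is missing the bipartiteness/K\H{o}nig step, the independent-set witness structure, and a crossing scheme with bounded multiplicity---each of which is essential, not routine.
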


The construction of the Hamilton cycle is based on auxiliary graphs
that are defined in Section~\ref{sec:overspan-graphs} to encode the
local structure of a given chordal graph.  Our main tool is a
hypergraph extension of Hall's Theorem, due to Aharoni and
Haxell~\cite{repr}; its application is described in
Section~\ref{sec:hall}. The proof of Theorem~\ref{t:main} is given in
Section~\ref{sec:tough}. We conclude the paper in
Section~\ref{sec:hc} discussing a strengthening of
Theorem~\ref{t:main} to Hamilton-connectedness.


\section{Tree representations and overspan graphs}
\label{sec:overspan-graphs}

For a graph $H$, let $V(H)$ denote the set of vertices, $E(H)$ the set of edges,
$c(H)$ the number of components of $H$. 
By a well-known theorem of Gavril~\cite{gavr}, for every chordal graph $G$
there exists a \emph{tree representation} of $G$
--- that is, a tree $T_0$ and a family $\FF$ of subtrees of $T_0$ such
that $G$ is isomorphic to the intersection graph of $\FF$.
For each vertex $v$ of $G$, let $F_v$ denote the corresponding subtree in $\FF$.

For a given chordal graph $G$, we choose a tree representation
$(T_0, \FF)$ such that the tree $T_0$ has minimal number of
vertices. Thus, for each leaf of $T_0$, there is a 
subtree in $\FF$ consisting of the leaf as its only vertex.
We fix this tree representation and choose an independent set $I$ in $G$
that is maximal with the property that for each $v\in I$, $F_v$
is a path all of whose vertices have degree at most $2$ in $T_0$.
Moreover, we choose $I$ such that for every $v\in I$, $F_v$ contains no subtree
of $\FF$ as a proper subgraph. For $v\in I$, a path $F_v$ is called an
\emph{$I$-path}; it is \emph{trivial} if it consists of a single vertex.
To emphasize the
distinction between the edges contained in $I$-paths and the other
edges, we colour each edge of $T_0$ either \emph{red} (if it belongs
to some $I$-path) or \emph{black} (otherwise).

Next, we fix the choice of the independent set $I$ and we modify $T_0$
into a tree $T$ which we call the base tree for $G$.
One by one, we suppress each degree $2$ vertex of $T_0$ that is not an
endvertex of any $I$-path (a trivial $I$-path has one endvertex).
The resulting tree $T$ (the
\emph{base tree} for $G$) inherits a red-black colouring of edges.
We observe that nontrivial $I$-paths in $T_0$ correspond one-to-one to red
edges in $T$, furthermore the red edges form a matching and their
endvertices are all of
degree $2$. Vertices of $T_0$ that exist also in $T$ are
called \emph{substantial} (that is, substantial vertices are the
endvertices of $I$-paths and vertices of degree at least $3$).
For further reference, let us state the following observation:
\begin{prop}\label{p:subst}
  For every vertex $v$ of $G$, the tree $F_v$ contains a substantial vertex.
\end{prop}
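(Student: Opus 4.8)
The plan is to argue by contradiction: suppose some vertex $v$ of $G$ has $F_v$ containing no substantial vertex, and derive a contradiction from the choices made in constructing $T$ and $I$. The first observation is that, since every vertex of $T_0$ of degree at least $3$ is substantial, all vertices of $F_v$ have degree at most $2$ in $T_0$; as $F_v$ is a subtree, it is therefore a path. More importantly, since $F_v$ is connected and meets only non-substantial (that is, suppressed) vertices, it must lie in the interior of a single edge of the base tree $T$: deleting the substantial vertices from $T_0$ leaves components that are exactly the interiors of the $T_0$-paths corresponding to edges of $T$, and $F_v$ is contained in one of them. So I would fix an edge $pq$ of $T$ whose underlying $T_0$-path is $p, x_1, \dots, x_m, q$ (with each $x_i$ suppressed) and record that $F_v \subseteq \Setx{x_1, \dots, x_m}$.

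The argument then splits according to the colour of $pq$. If $pq$ is red, then by the stated correspondence between red edges of $T$ and nontrivial $I$-paths, the whole path $p, x_1, \dots, x_m, q$ is an $I$-path $F_u$ with $u \in I$. Since $F_v$ misses both endvertices $p$ and $q$, it is a \emph{proper} subtree of $F_u$ lying in $\FF$, which contradicts the requirement that $F_u$ contain no subtree of $\FF$ as a proper subgraph.

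The black case is where the real work lies, and I expect it to be the main obstacle. Here no edge of the path $p, x_1, \dots, x_m, q$ lies on an $I$-path, and since each interior $x_i$ has degree $2$ and is not an endvertex of an $I$-path, one must check carefully that none of $x_1, \dots, x_m$ lies on \emph{any} $I$-path at all (an interior vertex of an $I$-path would force a red incident edge). I would then pass to an inclusion-minimal subtree: choose $F_w \in \FF$ with $F_w \subseteq F_v$ minimal, which exists as $F_v$ is itself a candidate. Then $F_w$ is a path with all vertices of degree at most $2$ and contains no proper subtree from $\FF$, so $w$ is an admissible candidate for membership in $I$. Invoking the maximality of $I$ for the path/degree property, either $w \in I$, or adjoining $w$ violates that property --- which, since $F_w$ is itself a suitable path, can only mean $w$ has a neighbour $u \in I$. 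In the first case $F_w$ is an $I$-path contained in $\Setx{x_1, \dots, x_m}$; in the second, $F_w \cap F_u \neq \emptyset$ places a vertex of $\Setx{x_1, \dots, x_m}$ on the $I$-path $F_u$. Either way some $x_i$ lies on an $I$-path, the desired contradiction.

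The delicate points to pin down are the precise interaction between the two defining features of $I$ (maximality for the path/degree condition, and the no-proper-subtree condition obtained via minimality) and the bookkeeping that a suppressed degree-$2$ vertex whose incident edges are both black genuinely avoids every $I$-path. Once these are in place, both cases close uniformly by exhibiting either a proper subtree inside an $I$-path or a vertex of $F_v$ lying on an $I$-path, both of which are impossible by construction.
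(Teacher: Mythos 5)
Your proof is correct and rests on the same two pillars as the paper's own argument: the no-proper-subtree condition on $I$ rules out $F_v$ sitting inside an $I$-path (your red case), and the maximality of $I$ yields the final contradiction once $F_v$ is shown to avoid every $I$-path (your black case). The paper reaches the same conclusion in four lines, working directly in $T_0$ without localizing $F_v$ to an edge of $T$ or passing to an inclusion-minimal $F_w$; your extra case analysis and the minimality detour are harmless but not needed, since the maximality of $I$ is taken only with respect to the path/degree property.
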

\begin{proof}
To the contrary, suppose there is a vertex $v$ such that the tree $F_v$ contains
no substantial vertex. That is, $F_v$ contains
neither a vertex whose degree in $T_0$ is at least $3$ nor an endvertex of any $I$-path.
In particular, $v \not \in I$ and by the choice of $I$, $F_v$ is not a proper subgraph of any $I$-path.
Hence $F_v$ does not intersect any $I$-path, so $v$ is not adjacent to any 
vertex of $I$. We obtain a contradiction with the maximality of $I$.
\end{proof}

\begin{figure}[h]
    \centering
    \includegraphics[scale=0.8]{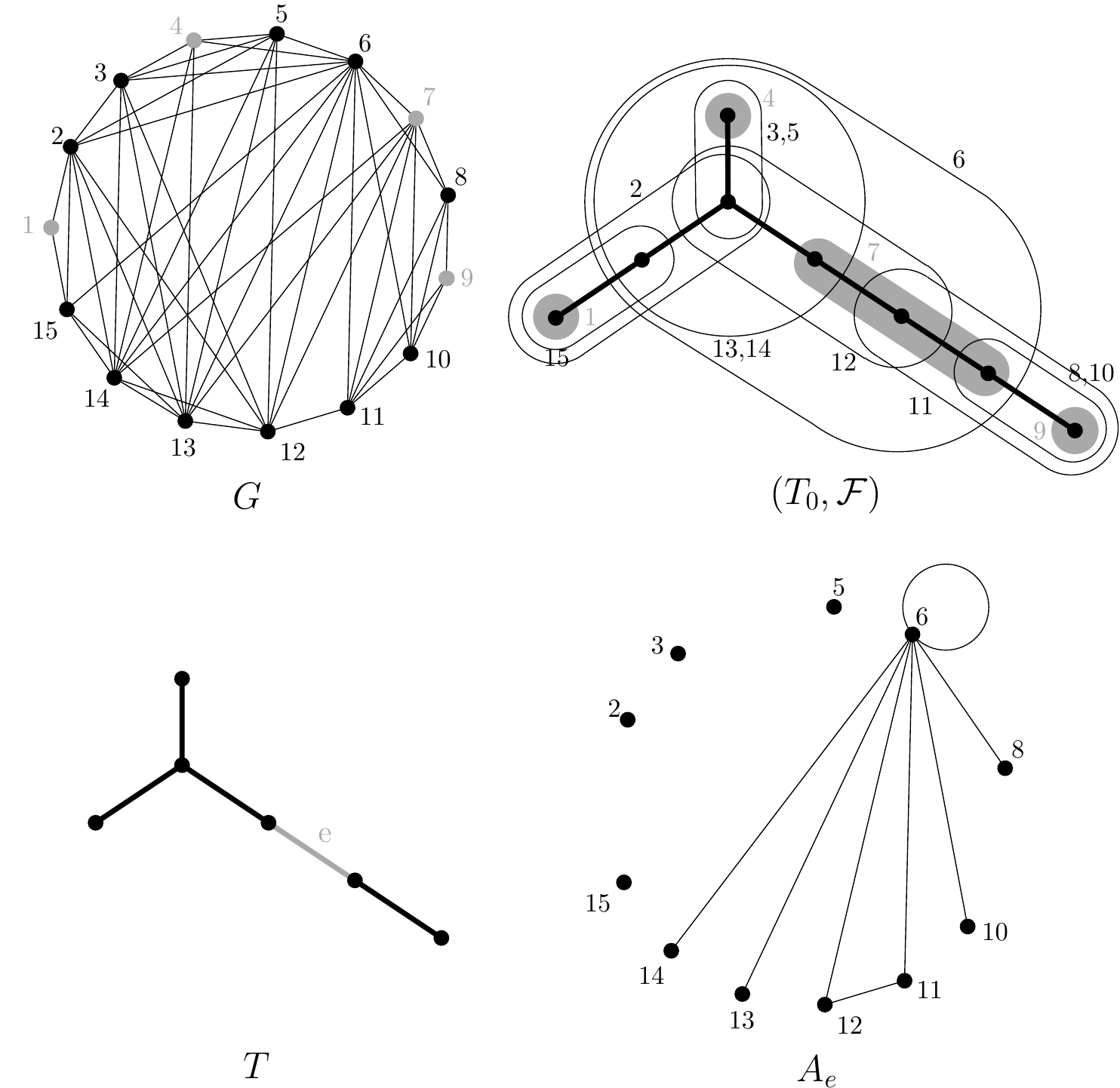}
    \caption{A chordal graph $G$, a tree representation $(T_0, \FF )$,
    a base tree $T$ and an overspan graph $A_e$ assigned to a
    red edge $e$. In the tree representation $(T_0, \FF )$ (top-right),
    the ovals depict subtrees of the tree $T_0$ that belong to $\FF$.
    The subtrees of $T_0$ and vertices of $A_e$ 
    are indexed by the same integer as the corresponding 
    vertices of $G$. In grey, we highlight the vertices of the set $I$ in $G$
    (top-left), the $I$-paths in $T_0$ (top-right) and the red edge $e$ in $T$
    (bottom-left).}
\end{figure}

We use $T$ to construct a family of so-called \emph{overspan graphs},
assigning one such graph $A_e$ to each edge $e$ of $T$. The vertex set
of $A_e$ is $V(G)\sm I$. The graph $A_e$ may contain loops; to avoid
ambiguity, we point out that we view a loop as an edge of a special
type. To describe the edges of $A_e$, we let $r$ and $s$ denote the
endvertices of $e$. (Note that these are substantial vertices of
$T_0$.)  The edge set of $A_e$ is defined as follows:
\begin{itemize}
\item there is a loop on a vertex $v$ if $F_v$ contains the vertices
  $r$ and $s$ in $T_0$,
\item vertices $u$ and $v$ are connected by an edge if $r\in V(F_u)$ and
  $s\in V(F_v)$ (or vice versa), and $uv$ is an edge of $G$.
\end{itemize}
Furthermore, for each black edge $e$ of $T$ we assign to $e$ an additional
overspan graph which is a copy of $A_e$. 

The family of overspan graphs for $G$ is constructed for a particular
tree representation $(T_0,\FF)$ and an independent set $I$. As the tree
representation and the independent set are fixed, let us use the
notation $\AAA(G)$ for the family of overspan graphs.

For $\BB \subseteq \AAA(G)$, we define a graph $G_\BB$ on vertex set
$V(G)\sm I$. The edge set of $G_\BB$ is the union of the edge sets of 
all the graphs that belong to $\BB$; each edge is included at most once
in this union. In case $\BB = \AAA(G)$, we let the graph be
denoted $G_\AAA$. 

The reason for the name `overspan graph' is that we view each edge of
$T$ as representing a gap that needs to be crossed by the desired
Hamilton cycle, and the edges of the corresponding overspan graph
encode the possible ways of doing so.
We conclude this section by pointing out a connection between the
family $\AAA(G)$ and the Hamiltonicity of $G$. In graphs with
loops (such as the overspan graphs and their unions), we allow loops
in matchings, as long as they are vertex-disjoint from
the other edges of the matching.

\begin{lemma}\label{l:ham}
  Let $G$ be a chordal graph on at least $3$ vertices and let $\AAA(G)$
  be the family of overspan graphs for $G$ (with respect to a tree
  representation of $G$ and an independent set $I$). Assume that we can
  choose one edge from each graph in $\AAA(G)$ in such a way that the
  chosen edges form a matching in $G_\AAA$. Then $G$ is Hamiltonian.
\end{lemma}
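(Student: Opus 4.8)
The plan is to realise the Hamilton cycle as a traversal of the base tree $T$ that is lifted to $G$ by means of the chosen representatives. I think of each edge $e=rs$ of $T$ as a gap, of a chosen edge $uv\in A_e$ (with $r\in V(F_u)$ and $s\in V(F_v)$) as a concrete way for the tour to pass from a vertex occupying $r$ to a vertex occupying $s$, and of a chosen loop as recording that a single vertex whose subtree spans $e$ crosses the gap on its own. Black gaps carry two representatives because a tree traversal crosses each such edge twice, once in each direction, whereas a red gap needs only one: by the choice of $I$, every non-$I$ vertex meeting the red path also reaches one of its substantial endvertices $r,s$ (otherwise $F_v$ would contain no substantial vertex, contradicting Proposition~\ref{p:subst}), so the only vertex that must be collected in the \emph{interior} of the path is the $I$-vertex $w$ corresponding to $e$. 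The single chosen edge $uv\in A_e$ is then used to insert $w$ as the segment $u\,w\,v$, the edges $uw$ and $wv$ being present in $G$ because $F_w$ meets $F_u$ at $r$ and $F_v$ at $s$.

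Following a traversal of $T$, I would concatenate the chosen crossing edges into a closed walk $W$ in $G$: whenever the traversal passes through a substantial vertex $r$ between two consecutive gaps, I link the vertex at which the incoming crossing ended to the vertex at which the outgoing crossing begins by an edge of $G$, which is available since both subtrees contain $r$ and hence intersect. The nontrivial $I$-vertices are absorbed into $W$ precisely at the red gaps as above, and a trivial $I$-path is threaded at its (substantial) node by the same linking step. Thus $W$ reaches the vertices of $V(G)\sm I$ through the representatives and the linking edges, and the vertices of $I$ through their paths.

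It then remains to check that $W$ is a Hamilton cycle. That every vertex is visited follows from Proposition~\ref{p:subst}: each $F_v$ contains a substantial vertex, so $v$ is picked up either as an endpoint of a chosen edge or loop, or by a linking step at that vertex. That no vertex is visited twice --- equivalently, that every vertex of $W$ has degree exactly two --- is where the hypothesis is essential: since the chosen edges form a matching in $G_\AAA$, no vertex of $V(G)\sm I$ is an endpoint of two representatives, so the linking can raise each vertex to degree two without conflict, while each nontrivial $I$-vertex receives its two edges solely from its red gap and each loop-vertex receives its two edges from the links flanking it.

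The step I expect to be the main obstacle is the global assembly: ensuring that $W$ is a \emph{single} cycle rather than a disjoint union of several, while keeping every degree equal to two. The matching condition governs the local picture but not connectivity, and the asymmetric bookkeeping at red gaps (one representative against an edge the traversal would otherwise cross twice) means the naive concatenation can fragment into pieces. To overcome this I would organise the traversal and the linking choices around the connectivity of $T$, using that the traversal meets every substantial vertex and rerouting the links there by an exchange argument to splice any sub-cycles together into one closed walk. Making this reassembly compatible with the one-representative treatment of red paths --- which works exactly because the sole interior occupant of such a path is its $I$-vertex --- is the delicate core of the argument.
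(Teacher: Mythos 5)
Your overall framework coincides with the paper's: an Euler tour of the symmetric orientation of the base tree $T$, the chosen representatives as gap-crossings, linking consecutive crossings at the shared substantial vertex, and Proposition~\ref{p:subst} to insert every remaining subtree at a distinguished substantial vertex, with the matching hypothesis guaranteeing that no vertex is picked up twice. But you explicitly leave what you call the ``delicate core'' unresolved, and the repair you gesture at (an exchange argument splicing sub-cycles together) is not the right mechanism; there is a concrete gap at the red edges. The Euler tour must traverse every edge of $T$ twice, including each red edge $e=rs$: since $r$ and $s$ have degree $2$, the part of $T$ beyond $s$ is reachable only through $e$, so a closed walk covering $T$ crosses $e$ once in each direction. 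Your construction spends both the representative $uv$ and the $I$-vertex $w$ on a single crossing, as the segment $u\,w\,v$, and supplies nothing for the return crossing from $s$ to $r$: at that moment the walk sits at some vertex $a$ with $s\in V(F_a)$ and must step to some vertex $b$ with $r\in V(F_b)$, but $F_a$ and $F_b$ need not intersect, so no edge of $G$ is guaranteed. This is exactly the asymmetry you noticed (``one representative against an edge the traversal would otherwise cross twice''), but naming it does not close it.

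The paper's resolution is local and simple: associate the pair $(F_w,F_w)$ with the \emph{second} directed copy of each red edge --- legitimate precisely because the nontrivial $I$-path $F_w$ contains both $r$ and $s$ --- and at the end delete the duplicate occurrence of $F_w$ (the same de-duplication step handles loops). So $w$ is collected on the return crossing, not inserted between $u$ and $v$ on the outbound one. Once every directed edge of the doubled tree carries a valid crossing, the Euler tour automatically produces a single cyclic sequence of subtrees in which consecutive members share a vertex of $T_0$; the fragmentation into several cycles that you fear cannot occur, and no splicing or rerouting argument is needed. In short, the global difficulty you flag is really this one local issue at red edges, and your sketch as written does not resolve it.
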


\begin{proof}
Let $M$ be the set of chosen edges that form a matching in $G_{\AAA}$.
We assume $T$ has $m$ edges ($m \geq 1$), and we fix an Euler tour $e_0, e_1, ..., e_{2m-1}$ in the
symmetric orientation of $T$. With every directed edge $e_i = t_i t_i'$ of the tour,
we associate a pair of subtrees $(F_i, F_i')$ of $\FF$ as follows.

For every edge $e$ of $T$ there are two corresponding directed
edges, say $e_i$ and $e_j$, in the symmetric orientation.
We discuss two cases: either $e$ is black or it is red.
If $e$ is black, then there are two assigned graphs in $\AAA(G)$, say $A_{e_i}$ and $A_{e_j}$.
By the assumption of the lemma, for $A_{e_i}$ there is a chosen edge of $M$,
namely a simple edge $uv$ or a loop $v$,
and we consider a pair of subtrees $(F_u, F_v)$ or $(F_v, F_v)$ of $\FF$ 
(recall the definition of edges of overspan graphs).
We associate $e_i$ with this pair of subtrees of $\FF$ and
associate $e_j$ with the pair of subtrees of $\FF$ obtained analogously for $A_{e_j}$.

Similarly, if $e$ is red, then there is one assigned graph in $\AAA(G)$
and a chosen edge of $M$, which gives a pair of subtrees of $\FF$ and we associate $e_i$ with this pair.
To find the associated pair for $e_j$, we recall that a red edge of $T$ is obtained from
a non-trivial $I$-path in $T_0$. We let $F_v$ denote this non-trivial $I$-path related to $e$
and we associate $e_j$ with the pair $(F_v, F_v)$.

We observe that no subtree of $\FF$ is used in more than one associated pair,
considering that the edges of $M$ form a matching in $G_{\AAA}$ and vertices of
$I$ are not included in $G_{\AAA}$.

We traverse the Euler tour $e_0, e_1, ..., e_{2m-1}$ edge by edge, and as we go we build a 
sequence of subtrees of $\FF$ as follows.
When traversing the edge $e_i  = t_i t_i'$ of the tour, we extend the sequence by adding
subtrees of the associated pair $(F_i, F_i')$.
In particular, we add the subtrees in the order $F_i, F_i'$
such that $t_i \in V(F_i)$ and $t_i' \in V(F_i')$.
We obtain a sequence $S = F_0, F_0', F_1, F_1',  ..., F_{2m-1}, F_{2m-1}'$.
By the definition of $S$, every two consecutive subtrees have a vertex in common
(the first and last subtrees are also considered consecutive), and we shall preserve this property even
as we further modify the sequence. 

Now, we extend the sequence so as to include all subtrees of $\FF$.
For every subtree of $\FF$ that is not in $S$, we choose one of its substantial vertices
arbitrarily; and we call it the distinguished vertex of this subtree.
(This is possible due to Proposition~\ref{p:subst}.)
For every vertex $t$ of $T$ in sequence,
we consider an edge of the tour 
incident with $t$ and directed towards $t$, say $e_i$,
and we note that $t \in F_i'$ and $t \in F_{i+1 \pmod{2m}}$.
We extend the sequence by adding all subtrees with a
distinguished vertex $t$ as successors of $F_i'$
in an arbitrary order.

Finally, we remove duplicities from the extended sequence.
For every associated pair of subtrees $(F_v, F_v)$ of $\FF$ that was obtained either
using a loop in $M$ or using a non-trivial $I$-path, we remove one copy of $F_v$ from the extended sequence.
In the resulting sequence, every subtree of $\FF$ occurs exactly once
and every two consecutive subtrees have a vertex in common. 
The assumption $m \geq 1$ implies $|\mathcal{F}| \geq 3$, so the sequence
of the corresponding vertices of $G$ defines a Hamilton cycle.

To complete the proof, we observe that if $T$ has no edge, 
then $G$ is Hamiltonian since it is a complete graph.
\end{proof}


\section{Hall's theorem for hypergraphs}
\label{sec:hall}

In this section, we recall an extension of Hall's Theorem to
hypergraphs due to Aharoni and Haxell~\cite{repr}. We use this result
as a tool to verify the condition in Lemma~\ref{l:ham}.

In accordance with~\cite{repr}, we define a hypergraph as a set
of subsets of a ground set. (In particular, multiple hyperedges are not allowed.)
Let $\AAA = \Setx{H_1, H_2, \dots, H_m}$ be a family of hypergraphs.
A \emph{system of disjoint representatives} for $\AAA$ is a function
$\map f \AAA {\bigcup_{i=1}^m H_i}$ such that for all distinct
$i,j\in\Setx{1,\dots,m}$, $f(H_i)$ is a hyperedge of $H_i$ and $f(H_i)
\cap f(H_j) = \emptyset$.
For $\BB \subseteq \AAA$, let $\bigcup\BB$ denote a hypergraph obtained
as a union of hypergraphs in $\BB$; each hyperedge is included at most once
in this union.
Recall that a \emph{matching} in a hypergraph is a collection
of pairwise disjoint hyperedges.
A corollary of the main result of \cite{repr} is stated here as 
the following theorem. 

\begin{theorem}\label{t:hall}%
  Let $\AAA$ be a family of $n$-uniform hypergraphs. A sufficient
  condition for the existence of a system of disjoint representatives
  for $\AAA$ is that for every $\BB \subseteq \AAA$, there exists a
  matching in $\bigcup\BB$ of size greater than $n(|\BB|-1)$.
\end{theorem}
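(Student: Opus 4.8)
The plan is to prove Theorem~\ref{t:hall} by deriving it from the main result of Aharoni and Haxell~\cite{repr}. Their theorem is a topological/combinatorial criterion guaranteeing a system of disjoint representatives (SDR) for a family of hypergraphs, phrased in terms of a connectivity or ``width'' condition on the unions $\bigcup\BB$. The goal here is to extract the clean numerical corollary stated above, where the hypergraphs are assumed $n$-uniform and the hypothesis is purely about matching sizes.

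First I would recall the precise statement from~\cite{repr}. Their result says (roughly) that a family $\AAA=\{H_1,\dots,H_m\}$ admits an SDR provided that for every $\BB\subseteq\AAA$, the union $\bigcup\BB$ is not coverable by fewer than $|\BB|$ ``small'' vertex sets --- more precisely, that any matching-based width parameter of $\bigcup\BB$ is at least $|\BB|$. The key quantitative fact I would invoke is the standard relationship between the size of a maximum matching and the relevant width parameter in an $n$-uniform hypergraph: a matching of size $\nu$ in an $n$-uniform hypergraph forces the width to be at least $\lceil \nu/n\rceil$, or equivalently that $n$ copies of a ``covering'' structure are needed to absorb each matching edge. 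This is where $n$-uniformity is used essentially.

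The main step is then the arithmetic translation. Given $\BB\subseteq\AAA$, the hypothesis provides a matching in $\bigcup\BB$ of size strictly greater than $n(|\BB|-1)$, i.e.\ of size at least $n(|\BB|-1)+1$. Dividing by $n$ and using that the width is at least the matching size divided by $n$ (rounded up), I would conclude that the width of $\bigcup\BB$ is at least $\lceil (n(|\BB|-1)+1)/n\rceil = |\BB|-1+\lceil 1/n\rceil = |\BB|$. This is exactly the hypothesis required by the Aharoni--Haxell theorem for every subfamily $\BB$, so their theorem yields the desired SDR. The strict inequality in the statement is precisely what is needed to push the ceiling up to $|\BB|$ rather than $|\BB|-1$.

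The main obstacle I anticipate is purely bookkeeping rather than conceptual: matching the exact formulation of the hypothesis in~\cite{repr} (which may be stated for a single family with a connectivity/topological condition, or in terms of a parameter that is not literally ``matching size'') to the matching-size hypothesis given here. The delicate point is verifying that the relevant parameter in their criterion is bounded below by (matching size)$/n$ in the $n$-uniform case, so that the clean inequality $\text{matching} > n(|\BB|-1)$ suffices. Once that correspondence is pinned down, the ceiling computation above closes the argument immediately; the uniformity assumption is what converts the additive ``$+1$'' in the matching hypothesis into the exact threshold $|\BB|$ needed to apply~\cite{repr}.
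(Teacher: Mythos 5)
The paper does not actually prove Theorem~\ref{t:hall}: it is quoted verbatim as a corollary of the main result of Aharoni and Haxell~\cite{repr}, so the paper's ``proof'' is the citation alone. Your plan is consistent with how that corollary is derived inside~\cite{repr} itself: their main theorem is topological, asserting that a system of disjoint representatives exists provided that for every $\BB\subseteq\AAA$ a connectivity parameter $\eta$ of the simplicial complex of matchings of $\bigcup\BB$ is at least $|\BB|$, and the numerical corollary follows from the auxiliary bound that for a hypergraph of rank at most $n$ this parameter is at least the matching number divided by $n$. Your closing arithmetic --- a matching of size at least $n(|\BB|-1)+1$ forces the integer-valued parameter up to $|\BB|$ --- is exactly the right final step, and the strict inequality in the hypothesis is indeed what makes the ceiling land on $|\BB|$.

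As a standalone proof, however, your proposal has a genuine gap, and it is the one you yourself flag: the entire mathematical content of the reduction lies in the inequality relating the matching number of an $n$-uniform (or rank-$n$) hypergraph to the ``width'' parameter appearing in the Aharoni--Haxell criterion, and you invoke this as ``the standard relationship'' without identifying the parameter or proving the bound. In~\cite{repr} that bound is a nontrivial topological lemma (a lower bound on the connectivity of the matching complex of a bounded-rank hypergraph), not a routine counting fact, and the main theorem it feeds into is itself proved by a topological argument (sphere mappings / a homological Hall-type criterion). If you are content to cite that lemma and that theorem from~\cite{repr}, you may as well cite the corollary itself, which is what the paper does; if you want a self-contained derivation, the connectivity lemma is precisely the part that needs proof, and your sketch does not supply it.
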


The nontrivial direction of Hall's Theorem for graphs follows directly
from the $n=1$ case of Theorem~\ref{t:hall}. In the argument, we shall use
the next case, $n=2$, where the members of $\AAA$ are graphs. Indeed,
we intend to apply Theorem~\ref{t:hall} to the family of overspan graphs
$\AAA(G)$, which we regard as hypergraphs with
hyperedges of size $1$ (loops) and $2$ (non-loops).
Recall that the \emph{rank} of a hypergraph is the maximum size of its
hyperedge.  
Theorem~\ref{t:hall} easily extends to non-uniform hypergraphs as follows:

\begin{coro} \label{c:hall}  
  Let $\AAA$ be a family of hypergraphs of rank at most $n$.
  If for every $\BB \subseteq \AAA$, there
  exists a matching in $\bigcup\BB$ of size greater than $n(|\BB|-1)$, 
  then there exists a system of disjoint representatives for $\AAA$.
\end{coro}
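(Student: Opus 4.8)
The plan is to reduce the non-uniform statement to the $n$-uniform case of Theorem~\ref{t:hall} by a padding argument. Write $\AAA = \Setx{H_1,\dots,H_m}$ and let $X$ denote the common ground set. For each index $i$ and each hyperedge $E\in H_i$ with $\size{E}<n$, I would introduce a set $D_{i,E}$ of $n-\size{E}$ fresh dummy vertices, chosen so that all the sets $D_{i,E}$ are pairwise disjoint and disjoint from $X$. Define $H_i' = \Set{E\cup D_{i,E}}{E\in H_i}$ and $\AAA' = \Setx{H_1',\dots,H_m'}$. By construction every hyperedge of every $H_i'$ has size exactly $n$, so $\AAA'$ is $n$-uniform; moreover, since distinct hyperedges receive disjoint dummy sets, no two padded hyperedges coincide and no padding creates a spurious intersection outside $X$.

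Next I would verify that $\AAA'$ satisfies the hypothesis of Theorem~\ref{t:hall}. Fix $\BB'\subseteq\AAA'$ and let $\BB\subseteq\AAA$ be the corresponding subfamily, so that $\size{\BB'}=\size{\BB}$. By assumption there is a matching $\mathcal M$ in $\bigcup\BB$ of size greater than $n(\size{\BB}-1)$. Each $E\in\mathcal M$ is a hyperedge of some member $H_i\in\BB$; replacing $E$ by its padded copy $E\cup D_{i,E}$ yields a hyperedge of $\bigcup\BB'$. Because the original hyperedges in $\mathcal M$ are pairwise disjoint within $X$ and the dummy sets are pairwise disjoint and lie outside $X$, the padded copies are again pairwise disjoint. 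Hence they form a matching in $\bigcup\BB'$ of the same size, which exceeds $n(\size{\BB'}-1)$, as required.

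Theorem~\ref{t:hall} then supplies a system of disjoint representatives $f'$ for $\AAA'$. For each $i$, the hyperedge $f'(H_i')$ has the form $E_i\cup D_{i,E_i}$ for some $E_i\in H_i$, and I would set $f(H_i)=E_i$. Since the sets $f'(H_i')$ are pairwise disjoint, so are their restrictions to $X$, namely the sets $E_i$; thus $f$ is a system of disjoint representatives for $\AAA$, completing the proof.

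The argument is essentially routine once the bookkeeping is set up correctly, and the one point demanding care is the preservation of the matching condition: the padding must neither shrink any matching nor merge hyperedges. Using a separate block of fresh dummy vertices for each hyperedge guarantees both that distinct hyperedges stay distinct after padding and that disjointness in the original union lifts to disjointness in the padded union, so the lower bound $n(\size{\BB}-1)$ transfers verbatim to $\AAA'$.
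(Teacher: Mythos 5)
Your proposal is correct and follows essentially the same route as the paper: pad each hyperedge of size $k$ with $n-k$ fresh dummy vertices to obtain an $n$-uniform family, observe that the matching hypothesis transfers verbatim, apply Theorem~\ref{t:hall}, and restrict the resulting representatives back to the original ground set. Your version merely spells out the bookkeeping (disjoint dummy blocks per hyperedge) that the paper leaves implicit.
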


\begin{proof}
For every hypergraph $H \in \AAA$, we define an $n$-uniform
hypergraph $H^+$ by adding $n - k$ new vertices for every
hyperedge of size $k$ and extending it to size $n$. 
We let $\AAA^+$ denote the resulting family of hypergraphs,
and for a subfamily $\BB \subseteq \AAA$ we let
$\BB^+$ denote the corresponding subfamily of $\AAA^+$.

By the natural correspondence of hyperedges, $\bigcup\BB^+$ contains
a matching of size greater than $n(|\BB|-1)$, for every $\BB^+ \subseteq \AAA^+$.
Since $\bigcup \BB^+$ is an $n$-uniform hypergraph, by Theorem~\ref{t:hall}
there is a system of disjoint representatives for $\AAA^+$,
and hence also for $\AAA$.
\end{proof}

Recall that the \emph{matching number} $\nu(H)$ is the size of a maximum matching
in graph $H$. The following is a reformulation of Lemma~\ref{l:ham}:
 
\begin{lemma} \label{l:match ham}  
  Let $G$ be a chordal graph on at least $3$ vertices. If for every $\BB
  \subseteq \AAA(G)$, the matching number of $G_\BB$ is greater than
  $2\size\BB-2$, then $G$ is Hamiltonian.
\end{lemma}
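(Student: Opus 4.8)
The plan is to deduce the statement directly by chaining Corollary~\ref{c:hall} and Lemma~\ref{l:ham}, with the family $\AAA(G)$ of overspan graphs playing the role of the family of hypergraphs. The key observation that bridges the two results is that each overspan graph $A_e$, once we regard its loops as hyperedges of size $1$ and its ordinary edges as hyperedges of size $2$, is a hypergraph of rank at most $2$. Thus $\AAA(G)$ is a family of hypergraphs of rank at most $2$, and Corollary~\ref{c:hall} applies with $n = 2$.

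First I would make precise the correspondence between the two notions of matching that are in play. For $\BB \subseteq \AAA(G)$, the graph $G_\BB$ (with loops admitted in matchings as vertex-disjoint single-vertex edges, following the convention set in Section~\ref{sec:overspan-graphs}) and the hypergraph $\bigcup\BB$ share the ground set $V(G)\sm I$ and carry the same collection of edges, respectively hyperedges. Consequently a matching in $G_\BB$ is literally the same object as a matching in the hypergraph $\bigcup\BB$, so $\nu(G_\BB)$ equals the maximum size of a matching in $\bigcup\BB$.

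With this identification in hand, the hypothesis that $\nu(G_\BB) > 2\size\BB - 2$ for every $\BB \subseteq \AAA(G)$ is exactly the hypothesis of Corollary~\ref{c:hall} for $n = 2$, namely that $\bigcup\BB$ contains a matching of size greater than $2(\size\BB - 1)$. Corollary~\ref{c:hall} then yields a system of disjoint representatives for $\AAA(G)$, that is, a function assigning to each overspan graph one of its hyperedges so that the chosen hyperedges are pairwise disjoint. Unwinding the definitions, this is precisely a choice of one edge from each graph in $\AAA(G)$ such that the chosen edges form a matching in $G_\AAA$.

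Finally I would invoke Lemma~\ref{l:ham}, whose hypothesis is now met, to conclude that $G$ is Hamiltonian. Since the argument is essentially a reformulation, the only point demanding real care is the matching identification in the second step, where the loop convention must be honoured consistently on both the graph side and the hypergraph side; once that is set up correctly, the remainder is a direct application of the two cited results.
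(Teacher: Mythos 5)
Your proposal is correct and follows exactly the paper's own argument: identify $G_\BB$ with the rank-$2$ hypergraph $\bigcup\BB$, apply Corollary~\ref{c:hall} with $n=2$ to obtain a system of disjoint representatives for $\AAA(G)$, and feed the resulting matching into Lemma~\ref{l:ham}. The care you take over the loop convention is the only subtlety, and you handle it the same way the paper does.
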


\begin{proof}
We view $G_\BB$ as a hypergraph of rank at most $2$.
For any $\BB \subseteq \AAA(G)$, in fact $G_\BB$ is the same
hypergraph as $\bigcup\BB$.  
By Corollary~\ref{c:hall} there exists a system of disjoint
representatives for $\AAA(G)$. The edges in the system form
a matching in $G_\AAA$. By Lemma~\ref{l:ham}, the graph $G$
is Hamiltonian. 
\end{proof}


\section{Vertex covers of the overspan graphs and\\toughness}
\label{sec:tough}

Throughout this section, $G$ is a chordal graph, $(T_0,\FF)$
is a tree representation and $I$ is an independent set
used for the construction of a base tree $T$, $\AAA$ is an associated 
family of overspan graphs and $G_{\BB}$ is the union of graphs in
$\BB \subseteq \AAA$, all defined as in Section~\ref{sec:overspan-graphs}.
In addition, we say an edge $e$ of $T$ is a \emph{$\BB$-edge} 
if the overspan graph assigned to $e$ belongs to $\BB$. 

We concluded Section~\ref{sec:hall} with Lemma~\ref{l:match ham}
that provides a sufficient condition for the Hamiltonicity of $G$ in terms
of the matching numbers $\nu(G_\BB)$. As a next step, we relate
the matching number of $G_\BB$ to its vertex cover number.
Recall that the \emph{vertex cover} of a graph $H$ is a set of its vertices
such that every edge of $H$ is incident with a vertex in this set.
The \emph{vertex cover number} $\tau(H)$ is the size of a minimum
vertex cover of $H$. By the classical theorem of
K\"{o}nig, ${\nu(H)=\tau(H)}$ for every bipartite graph $H$. 
We show that the same equality holds for $G_\BB$.

\begin{lemma}\label{l:konig}
  The graph $G_\BB$ satisfies $\nu(G_\BB) = \tau(G_\BB)$.
\end{lemma}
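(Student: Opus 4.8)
The plan is to prove that $\nu(G_\BB) = \tau(G_\BB)$ by showing that $G_\BB$ is essentially bipartite in a suitable sense, and then invoking K\"{o}nig's Theorem. The key observation is that the edges of an overspan graph $A_e$ (for $e = rs$) always connect a vertex $u$ whose subtree $F_u$ contains $r$ to a vertex $v$ whose subtree $F_v$ contains $s$. Since $r$ and $s$ are the two distinct endvertices of the edge $e$ in $T$, this suggests a natural bipartition based on which endvertex of $e$ the relevant subtree covers. The main obstacle is that $G_\BB$ is a union over \emph{several} overspan graphs assigned to \emph{different} edges $e$ of $T$, and the bipartition induced by one edge need not be consistent with the bipartition induced by another; moreover $G_\BB$ contains loops, which are not permitted in a genuinely bipartite graph.

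First I would handle the loops. A loop on $v$ in $A_e$ occurs when $F_v$ contains both endvertices $r$ and $s$ of $e$. Since we allow loops in matchings (as vertex-disjoint singletons) and a loop must be covered by its own vertex in any vertex cover, I would argue that each such vertex $v$ can be placed into the cover and removed from both sides, reducing to the loopless case; alternatively, I would verify directly that the K\"{o}nig equality is preserved under adding pendant-loop vertices, since a loop contributes $1$ to both $\nu$ and $\tau$ locally and does not interact with the bipartite structure. The cleaner route is probably to observe that deleting the loop-vertices (adding them to the cover) reduces both $\nu$ and $\tau$ by the same amount, so it suffices to establish the equality for the loopless part.

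Next, for the loopless part, I would set up a global bipartition of $V(G) \sm I$ and show that every (non-loop) edge of $G_\BB$ respects it. The natural candidate comes from orienting or two-colouring the tree $T$: since $T$ is a tree, it is bipartite, so I can $2$-colour its vertices with colours in $\Setx{0,1}$. For an edge $e = rs$ of $T$, the endvertices $r$ and $s$ receive opposite colours. An edge $uv$ of $A_e$ has $r \in V(F_u)$ and $s \in V(F_v)$ (or vice versa). The difficulty is that a single subtree $F_u$ may contain substantial vertices of both colours, so the colour of a vertex of $G_\BB$ is not determined by $F_u$ alone but depends on which endvertex of which edge is being crossed. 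The hard part will be showing that these local constraints nonetheless assemble into a single consistent bipartition of the whole graph $G_\BB$; I expect this to follow from a careful analysis of how the subtrees $F_v$ sit inside $T$ and the fact that each $F_v$ is connected, possibly by assigning to each vertex of $G_\BB$ a colour based on a canonical substantial vertex (such as the one guaranteed by Proposition~\ref{p:subst}) and checking that every edge still joins opposite colours.

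Once a consistent bipartition is established and loops are accounted for, the conclusion is immediate: $G_\BB$ restricted to its non-loop edges is bipartite, so by K\"{o}nig's Theorem $\nu = \tau$ holds there, and the loop-vertices contribute equally to both parameters, giving $\nu(G_\BB) = \tau(G_\BB)$ overall. I anticipate that the genuine content of the lemma, and the step most likely to require a subtle argument, is precisely the claim that the per-edge bipartitions are globally compatible; if they are not compatible for arbitrary $\BB$, one might instead need to exploit the tree structure more directly, for instance by arguing that any odd cycle in $G_\BB$ would force a corresponding closed walk in $T$ using edges of a single parity class, which a tree cannot support.
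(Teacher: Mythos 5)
Your overall architecture matches the paper's: strip the loop-vertices (each contributes exactly $1$ to both $\nu$ and $\tau$, so the reduction to the loopless graph is sound), prove the remainder bipartite, and apply K\"{o}nig. The loop-handling step is fine. But the heart of the lemma is the bipartiteness claim, and there you have only named the difficulty rather than resolved it. The specific candidate you propose --- $2$-colour the vertices of $T$ and assign each vertex $u$ of $G_\BB$ the colour of a canonical substantial vertex of $F_u$ --- does not work, for precisely the reason you flag: even after the loop-vertices of $G_\BB$ are removed, a subtree $F_u$ may still contain two adjacent substantial vertices of $T$ (namely the endvertices of an edge of $T$ whose overspan graph is \emph{not} in $\BB$), hence substantial vertices of both colours; the canonical choice is then genuinely ambiguous, and one can arrange an edge $uv$ of some $A_e$ with $e=rs\in B$ whose canonical representatives receive the same colour. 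Your fallback remark about odd cycles forcing an odd closed walk in $T$ is the right instinct but is not an argument as stated, since a walk in $G_\BB$ does not project to a walk in $T$ edge-for-edge.

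The missing idea is a contraction. Let $B$ be the set of $\BB$-edges of $T$ and contract every edge of $T$ \emph{not} in $B$, obtaining a tree $T'$ whose vertices are the components of $T-B$. The key point is that for a vertex $u$ with no loop in $G_\BB$, the subtree $F_u$ contains both endvertices of no edge of $B$; since $F_u$ is connected and contains a substantial vertex (Proposition~\ref{p:subst}), all its substantial vertices lie in a \emph{single} component of $T-B$. This makes the map $u\mapsto(\text{that component})$ well defined, and an edge $uv$ of $A_e$ with $e=xy\in B$, $x\in V(F_u)$, $y\in V(F_v)$, is sent to the edge of $T'$ arising from $xy$. One thus gets a graph homomorphism from the loopless part of $G_\BB$ into the tree $T'$, which is bipartite, and bipartiteness is inherited. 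In other words, the per-edge bipartitions are reconciled not on $T$ itself but on the quotient $T'$, and the well-definedness of the quotient map is exactly what the absence of loops buys you. Without this (or an equivalent) step, your proof is incomplete at its central point.
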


\begin{proof}
We remove from $G_{\BB}$ all vertices incident with a loop,
and let $G_{\BB}^*$ denote the resulting graph.

First, we show that $G_{\BB}^*$ is bipartite.
We let $B$ denote the set of all $\BB$-edges of $T$.
By definition, a vertex $u$ of $G_{\BB}^*$ is also
a vertex of $G$ and there is a related subtree $F_u$ in the tree 
representation. By Proposition \ref{p:subst},
$F_u$ contains a substantial vertex.
Furthermore, $u$ is not incident with a loop in $G_{\BB}$,
so $F_u$ does not contain both endvertices of any edge of $B$.
Hence $F_u$ contains substantial vertices from just one component of $T - B$.

In $T$, we contract every edge that is not in $B$ and we let $T'$ denote the
resulting tree. Vertices of $T'$ correspond one-to-one to components
of $T - B$. For every vertex $u$ of $G_{\BB}^*$, we
associate $u$ with a vertex of $T'$ such that the corresponding
component of $T - B$ contains all substantial vertices of $F_u$.

Let $u$ and $v$ be vertices adjacent in $G_{\BB}^*$.
By the definition of $G_{\BB}$, there is an edge $xy$ in $B$
such that $x \in V(F_u)$ and $y \in V(F_v)$. 
The vertex of $T'$ associated with $u$ (with $v$) is obtained by
contracting all edges of the component of $T - B$ containing $x$
(containing $y$, respectively).
As $x$ and $y$ are adjacent in $T$, the associated vertices are adjacent in $T'$.
The association of vertices is a graph homomorphism from 
$G_{\BB}^*$ to a tree, thus $G_{\BB}^*$ is bipartite.

Since $\nu(H) \leq \tau(H)$ holds for every graph $H$,
it suffices to prove $\nu(G_{\BB}) \geq \tau(G_{\BB})$ for the graph $G_{\BB}$. 
By K\"{o}nig's theorem, $\nu(G_{\BB}^*) = \tau(G_{\BB}^*)$
since $G_{\BB}^*$ is bipartite.
A matching in $G_{\BB}^*$ extended with all the loops forms
a matching in $G_{\BB}$. A vertex cover in
$G_{\BB}^*$ extended with all the vertices incident
with a loop in $G_{\BB}$ forms a vertex cover in $G_{\BB}$.
Hence $\nu(G_{\BB}) \geq \tau(G_{\BB})$.
\end{proof}

In the analysis of the toughness of the chordal graph $G$,
we shall use the following technical lemma on trees:

\begin{lemma} \label{l:tree} 
Let $T$ be a tree.
For $i\in\Setx{0,1,2}$, let $E_i \subseteq E(T)$ be
such that every edge of $E_i$ is incident with exactly $i$
vertices of degree at most $2$.
For every $\frac{1}{3} \leq k \leq \frac{1}{2}$, the graph
$T - (E_0 \cup E_1 \cup E_2)$ has at least
$1 + k|E_{0}| + (1-k)|E_{1}| + |E_{2}|$ components that contain
a vertex whose degree in $T$ is at most $2$.
\end{lemma}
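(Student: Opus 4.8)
The plan is to work with the complementary count. Write $m=|E_0|+|E_1|+|E_2|$; since $T$ is a tree, deleting these $m$ edges leaves a forest with exactly $m+1$ components. Call a component of $T-(E_0\cup E_1\cup E_2)$ \emph{bad} if every one of its vertices has degree at least $3$ in $T$ (that is, it contains no vertex of degree at most $2$), and let $c$ be the number of bad components; the components we must count are then precisely the remaining $m+1-c$ ones. Substituting $m=|E_0|+|E_1|+|E_2|$, I would first check that the claimed inequality $m+1-c\ge 1+k|E_0|+(1-k)|E_1|+|E_2|$ is equivalent to the single inequality $c\le(1-k)|E_0|+k|E_1|$, which is what I will prove for every $k\in[\tfrac13,\tfrac12]$. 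We may assume $c\ge 1$, as $c=0$ makes the target trivial.

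Next I would record two facts about a bad component $C$. As $C$ is a subtree of $T$ on, say, $p$ vertices, it has $p-1$ internal edges; comparing the degree sum $\sum_{v\in C}\deg_T(v)\ge 3p$ with the contribution $2(p-1)$ of the internal edges shows that at least $p+2\ge 3$ edges of $T$ leave $C$. Call these the \emph{boundary edges} of $C$. Each boundary edge was deleted, so it lies in $E_0\cup E_1\cup E_2$, and its endpoint inside $C$ has degree at least $3$; hence no boundary edge lies in $E_2$, that is, every boundary edge belongs to $E_0\cup E_1$.

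The heart of the argument is a double count of the incidences between bad components and their boundary edges. Let $a$ be the number of deleted edges both of whose endpoints lie in (distinct) bad components; such an edge automatically lies in $E_0$, as both its endpoints have degree at least $3$. Let $g$ be the number of boundary edges incident with exactly one bad component. Since each bad component has at least $3$ boundary edges, the total number of incidences satisfies $2a+g=\sum_C(\text{boundary edges of }C)\ge 3c$. On the other hand, an $E_0$-edge is incident with at most two bad components and an $E_1$-edge with at most one, so this total is at most $2|E_0|+|E_1|$; this yields $3c\le 2|E_0|+|E_1|$, the bound at $k=\tfrac13$. For the bound at $k=\tfrac12$ I would use the tree structure: contracting each component of $T-(E_0\cup E_1\cup E_2)$ turns the bad components into vertices of a tree $T'$, and the $a$ edges span a subgraph of $T'$ on these $c$ vertices, hence a forest, so $a\le c-1$. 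Combining this with $2a+g\ge 3c$ and the fact that the $a$ and $g$ edges are distinct members of $E_0\cup E_1$ (so $a+g\le |E_0|+|E_1|$) gives $|E_0|+|E_1|\ge a+g\ge (2a+g)-a\ge 3c-(c-1)=2c+1$, whence in particular $2c\le|E_0|+|E_1|$.

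Finally, for an arbitrary $k\in[\tfrac13,\tfrac12]$ I would observe that $(1-k)|E_0|+k|E_1|$ is affine in $k$, hence a convex combination of its values $\tfrac23|E_0|+\tfrac13|E_1|$ at $k=\tfrac13$ and $\tfrac12|E_0|+\tfrac12|E_1|$ at $k=\tfrac12$; since the two displayed bounds give $c\le\tfrac23|E_0|+\tfrac13|E_1|$ and $c\le\tfrac12|E_0|+\tfrac12|E_1|$, the same bound holds for the convex combination, proving $c\le(1-k)|E_0|+k|E_1|$. I expect the forest bound $a\le c-1$ to be the crux: it is the only place where the tree structure of $T'$, rather than plain incidence counting, is used, and without it one obtains only the $k=\tfrac13$ inequality, which is too weak once $k$ approaches $\tfrac12$. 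The remaining steps are routine bookkeeping.
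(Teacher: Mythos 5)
Your proof is correct, but it takes a genuinely different route from the paper's. You pass to the complementary count $c$ of \emph{bad} components (those all of whose vertices have degree at least $3$ in $T$), reduce the claim to $c\le(1-k)|E_0|+k|E_1|$, and prove it at the two endpoints of the interval by a global double count: a bad component on $p$ vertices has at least $p+2\ge 3$ boundary edges, all necessarily in $E_0\cup E_1$, which gives $3c\le 2a+g\le 2|E_0|+|E_1|$ (the $k=\tfrac13$ bound), while the $k=\tfrac12$ bound additionally uses the forest bound $a\le c-1$ obtained by contracting the components of $T-(E_0\cup E_1\cup E_2)$ into a tree. Convexity in $k$ then interpolates. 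All steps check out: the sets $E_i$ are automatically pairwise disjoint, so the deleted graph has exactly $|E_0|+|E_1|+|E_2|+1$ components; every edge of $T$ with both ends in a component is an edge of that component (no chords inside a subtree of a tree), so the boundary count $p+2$ is right; and the reduction to $c\ge 1$ is harmless. The paper instead proceeds by induction on the number of degree-$2$ vertices, splitting $T$ at such a vertex and adding the bounds for the two pieces; its base case (no degree-$2$ vertices) contracts the leaf-containing components into a tree $T'$ with $\ell$ leaves and no degree-$2$ vertices and uses $2\ell-3\ge|E_0|+|E_1|$ together with $\ell\ge|E_1|$. Your argument is non-inductive, makes the role of the interval $[\tfrac13,\tfrac12]$ transparent (each endpoint corresponds to one of your two counting inequalities), and in fact yields the marginally stronger $2c+1\le|E_0|+|E_1|$ when $c\ge1$; the paper's induction is shorter to write but concentrates the combinatorial content in the base case, where a closely analogous edge count appears.
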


\begin{proof}
Let $E_* = E_0 \cup E_1 \cup E_2$.
For a tree $T$ and a subset $E$ of its edge set, let $c_2(T - E)$
denote the number of components of the forest $T - E$ that contain
a vertex whose degree in the tree $T$ is at most $2$.

We proceed by induction on the number of vertices of degree $2$.
Suppose $T$ contains no such vertex. (Thus, the only vertices of degree at most $2$ are the leaves of $T$.)
If $|E_2| \geq 1$, then $T$ is a tree on $2$ vertices and the statement holds,
so in addition we can suppose $|E_2| = 0$. 
We consider all components of $T - E_*$ that contain a leaf of $T$.
For each such component, we contract all edges in the subtree of $T$ that corresponds to this component,
and if the resulting vertex is not a leaf, then we add a new leaf adjacent to this vertex;
we let $T'$ denote the resulting tree.
We let $\ell$ denote the number of leaves of $T'$.
Since $T$ contains no vertex of degree $2$, we have $\ell = c_2(T - E_*)$.
Furthermore, $T'$ contains no vertex of degree $2$.
By an easy inductive argument, such a tree has at most
$2\ell-2$ vertices, and therefore at most $2\ell-3$ edges.
In conjunction with $|E_2| = 0$, this implies the following bound:
\begin{equation}\label{eq:1ell}
2\ell - 3 \geq |E_0| + |E_1|.
\end{equation}

The absence of degree $2$ vertices in $T$ implies that
every edge of $E_1$ is incident with a leaf in $T$, which yields
\begin{equation}\label{eq:2ell}
\ell \geq |E_1|.
\end{equation}

To show that $c_2(T-E_*) \geq 1 + k|E_{0}| + (1-k)|E_{1}|$,
we consider the right hand side of this inequality in the form
$1 + k(|E_{0}| + |E_{1}|) + (1-2k)|E_{1}|$.
By~\eqref{eq:1ell} and~\eqref{eq:2ell}, we have for $\frac{1}{3} \leq k \leq \frac{1}{2}$,
\begin{align*}
1 + k(|E_{0}| + |E_{1}|) + (1-2k)|E_{1}| 
&\leq 1 + k(2\ell - 3) + (1-2k)\ell \\
&= 1 - 3k + \ell \leq \ell = c_2(T-E_*).
\end{align*}

Thus, the lemma holds for a tree that contains no vertex of degree $2$.

Suppose that $T$ contains a vertex $u$ of degree $2$.
We let $T_1$ and $T_2$ be the two subtrees of $T$
such that $u$ is the only common vertex of $T_1$ and $T_2$
and every vertex of $T$ is in $T_1$ or $T_2$.
We observe that for $i\in\Setx{0,1,2}$ and $j\in\Setx{1,2}$,
every edge in $E^j_i = E_i \cap E(T_j)$ is incident with exactly $i$
vertices of degree at most $2$ in $T_j$,
so by induction the statement holds for 
$T_j$ with the sets of edges $E^j_i$ playing the role of $E_i$.
The trees $T_1$ and $T_2$ have no common edge, so
$|E_i| = |E^1_i| + |E^2_i|$ for $i\in\Setx{0,1,2}$, and we have:
\begin{align*}
c_2(T  - E_*)
&= c_2(T_1  - E_*) + c_2(T_2  - E_*) - 1 \\
&\geq 1 + k|E^1_0| + (1-k)|E^1_1| + |E^1_2| +
1 + k|E^2_0| + (1-k)|E^2_1| + |E^2_2| -1 \\
&= 1 + k|E_{0}| + (1-k)|E_{1}| + |E_{2}|.
\end{align*}
\end{proof}

In relation to an edge $e$ of $T$, we say that two vertices $u$, $v$
of $G$ form an \emph{$e$-enclosing pair} if there is a pair of
substantial vertices
$s \in V(F_u)$ and $t \in V(F_v)$ such that $s$ and $t$ are in different
components of $T - e$.

Lemma~\ref{l:disc} is a key part of the argument
relating vertex covers of $G_{\BB}$ to disconnecting sets of $G$.

\begin{lemma} \label{l:disc}
Let $e$ be an edge of $T$ and let $A_e$ be the overspan graph
assigned to $e$.
Let $C$ be a vertex cover of $A_e$.
We define a set $S \subseteq V(G)$ as follows:
In case $e$ is a black edge, let $S = C$,
or in case $e$ is a red edge, let $x$ be the corresponding vertex of $I$
and let $S = C \cup \{ x \}$.
If vertices $u$, $v$ of $G - S$ form an $e$-enclosing pair,
then $u$ and $v$ are in different components of $G - S$.
\end{lemma}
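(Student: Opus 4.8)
The plan is to argue by contradiction: suppose $u$ and $v$ form an $e$-enclosing pair in $G-S$ but lie in the \emph{same} component of $G-S$. By the definition of an $e$-enclosing pair, there are substantial vertices $s \in V(F_u)$ and $t \in V(F_v)$ that lie in different components of $T-e$. Writing $r$ and $s'$ for the two endvertices of $e$, the vertex $s$ is on one side of $e$ in $T$ and $t$ on the other. Since $u$ and $v$ are in the same component of $G-S$, there is a $u$--$v$ path $P = w_0 w_1 \cdots w_\ell$ in $G-S$ with $w_0 = u$ and $w_\ell = v$, all of whose vertices avoid $S$. The goal is to show that some edge of $P$ must be an edge of $A_e$ with both endpoints outside $C$ (or that some vertex of $P$ carries a loop of $A_e$ not covered by $C$), contradicting that $C$ is a vertex cover of $A_e$ and that $S$ contains $C$.

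First I would set up the \emph{tree-side} picture. For each vertex $w$ of $G$, the subtree $F_w$ lies somewhere in $T_0$; since $e$ is an edge of $T$ it corresponds (if red) to a nontrivial $I$-path or (if black) to a genuine edge of $T_0$, and in either case deleting $e$ splits $T$ (and correspondingly $T_0$) into two sides, call them the $r$-side and the $s'$-side. I would track, along the path $P$, which side of $e$ each $F_{w_i}$ ``reaches'' via its substantial vertices. Because $s \in V(F_u)$ is on the $r$-side and $t \in V(F_v)$ is on the $s'$-side, the endpoints of $P$ reach opposite sides. The key is a \emph{discrete intermediate-value} step: as we walk along $P$, consecutive subtrees $F_{w_i}$ and $F_{w_{i+1}}$ intersect in $T_0$ (they share a vertex, since $w_i w_{i+1}$ is an edge of $G$), so the side they occupy can only change at an index $i$ where the common vertex of $F_{w_i}$ and $F_{w_{i+1}}$ is ``near'' $e$. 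I would make this precise by showing there is an index $i$ where either one subtree contains both $r$ and $s'$ (yielding a loop of $A_e$), or $F_{w_i}$ contains $r$ and $F_{w_{i+1}}$ contains $s'$ with the two disjoint across $e$ --- exactly the condition defining a non-loop edge of $A_e$ between $w_i$ and $w_{i+1}$.

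The crossing index then produces either a loop of $A_e$ on $w_i$ or an edge $w_i w_{i+1}$ of $A_e$. In the loop case, since $C$ is a vertex cover of $A_e$, the vertex $w_i$ must lie in $C \subseteq S$, contradicting $w_i \in V(P) \subseteq V(G-S)$. In the edge case, $C$ must contain $w_i$ or $w_{i+1}$, again contradicting that $P$ avoids $S$. The red-black distinction is handled by the extra vertex $x$: when $e$ is red, $x$ is the vertex of $I$ whose $I$-path is the nontrivial path contracted to $e$, and $x$ occupies precisely the tree positions between the two sides; adding $x$ to $S$ ensures that $P$ cannot ``slip across'' $e$ by routing through a subtree straddling the $I$-path without being detected as an $A_e$-edge or loop.

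The main obstacle I anticipate is the bookkeeping in the crossing step, specifically the interaction between $T_0$ (where subtrees actually live) and $T$ (where $e$ is an edge), since $e$ may correspond to a suppressed path of degree-$2$ vertices in $T_0$. I would need to verify that ``being on the $r$-side'' as measured by substantial vertices is well-defined for every $F_{w_i}$ along $P$ --- this is where Proposition~\ref{p:subst} is essential, guaranteeing each $F_{w_i}$ has a substantial vertex and hence an unambiguous side --- and that the non-substantial (suppressed) vertices cannot let a subtree straddle $e$ without triggering a loop or the required $A_e$-edge. Pinning down that every genuine side-change along $P$ coincides with an $A_e$-edge or loop, and that the red case's extra vertex $x$ closes the only remaining escape route, is the delicate part; the rest is a direct contradiction with $C$ being a vertex cover.
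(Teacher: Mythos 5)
Your argument is correct and is essentially the paper's proof in contrapositive form: the paper partitions $G-S$ into the two "sides" of $e$ (using Proposition~\ref{p:subst} and the fact that any straddling subtree yields a loop covered by $C$ or the vertex $x$) and shows no edge of $G-S$ crosses between them, while you walk a hypothetical $u$--$v$ path and locate the crossing edge/loop of $A_e$ that $C$ fails to cover. The two formulations rest on exactly the same observations, so this counts as the same approach.
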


\begin{proof}
We first claim that $G - S$ consists of vertices whose corresponding
subtree in $\FF$ contains substantial vertices from exactly one component
of $T - e$.
Let $r, s$ be the (substantial) vertices incident with the edge $e$ in $T$.
Let $w$ be a vertex of $G$ such that $F_w$ contains a substantial vertex
from each component of $T - e$. Hence $F_w$ contains $r$ and $s$.
We show that $w \in S$.
If $w \in I$, then $e$ is a red edge and $w = x$. If $w \in V(G) \sm I$, then by 
the construction of $A_e$, $w$ is incident with a loop in $A_e$,
hence $w \in C$.
For every vertex of $G - S$, the corresponding subtree in $\FF$ does not
contain a vertex from each component of $T - e$.
The claim follows from Proposition~\ref{p:subst}.
Moreover, observe that if two vertices are adjacent in $G - S$, then
the two corresponding subtrees in $\FF$ contain vertices from the same
component of $T - e$.

Let $u$, $v$ be vertices of $G - S$ that form an $e$-enclosing pair.
So $F_u$ contains vertices from one component of $T - e$ and $F_v$
contains vertices from the other component. 
In particular, $u \neq v$.
Let $U$ be the set of all vertices of $G - S$ such that the corresponding
subtrees in $\FF$ contain substantial vertices from the same
component of $T - e$ as the subtree $F_u$, and let $V$ be the set
of vertices of $G - S$ that are not in $U$.  
We conclude that there is no edge from $U$ to $V$ in $G - S$,
hence there is no path from $u$ to $v$. The vertices $u$ and $v$ are in
different components of $G - S$.
\end{proof}
 
We are now ready to prove Theorem~\ref{t:main},
showing that every $10$-tough chordal graph on at least $3$ vertices is Hamiltonian.

\begin{proof}[Proof of Theorem~\ref{t:main}]
Let $G$ be a $10$-tough chordal graph on at least $3$ vertices,
and for the sake of the contradiction suppose that $G$ is not Hamiltonian.
By Lemma~\ref{l:match ham}, there is a subfamily $\BB_0 \subseteq \AAA$
such that $\nu(G_{\BB_0}) \leq 2\size{\BB_0}-2$ and
by Lemma~\ref{l:konig}, we also have $\tau(G_{\BB_0}) \leq 2\size{\BB_0}-2$.
Let $C$ be a minimum vertex cover of $G_{\BB_0}$;
we fix $C$ and extend $\BB_0$ to a maximal subfamily $\BB$ such that $C$ is
a vertex cover of $G_\BB$.
Clearly, $\size C \leq 2\size\BB-2$.
We produce a separating set $S\subseteq V(G)$
demonstrating that $G$ is not $10$-tough; to find it, we augment $C$
as follows.

Let $B$ be the set of all $\BB$-edges of $T$. Let $E'$ be the set of all
red edges of $B$ such that none of the adjacent (black) edges of $T$
belongs to $B$. Every red edge
$e$ corresponds to an $I$-path, say $F_{v_e}$; let $X'$ be the set of
all vertices $v_e$ of $G$ such that $e\in E'$. We set $S = C\cup X'$
and show that it has the required properties.

Let $E_*$ be the set of all black edges that belong to $B$. For
$i\in\Setx{0,1,2}$, let $E_i \subseteq E_*$ consist of edges incident
with exactly $i$ vertices whose degree in $T$ is at most~$2$.
Clearly, $\size{E_*} = \size{E_0} + \size{E_1} + \size{E_2}$.
A black edge of $E_i$ is adjacent to at most
$i$ red edges, and every red edge in $B \sm E'$ is adjacent to
a black edge of $B$, hence 
$\size{B \sm (E_* \cup E')} \leq \size{E_1} + 2\size{E_2}$.
By the definition of $\AAA$, there are two overspan graphs assigned
to every black edge, hence
$\size{\BB} \leq 2\size{E_0} + 3\size{E_1} + 4\size{E_2} + \size{E'}$.
We bound the size of the separating set $S$:
\begin{equation}\label{eq:1}
  \size{S} = \size{C} + \size{X'} \leq 2\size\BB-2 + \size{E'}
  < 4\size{E_0} + 6\size{E_1} + 8\size{E_2} + 3\size{E'}.
\end{equation}

In order to bound the number of components $c(G - S)$, let us
start with $c(G - C)$.
Observe that for every substantial vertex of degree at most $2$,
there is an $I$-path that contains this vertex. Furthermore,
every trivial $I$-path contains exactly one substantial vertex and
every non-trivial $I$-path contains exactly two substantial vertices
that are connected by a red edge in $T$.
Note that $T$ with the sets of edges $E_0, E_1, E_2$ fit the
criteria of Lemma~\ref{l:tree}, which we apply with $k = \frac{2}{5}$.
Consequently, the graph $T - E_*$ has more than
$\frac{2}{5}|E_{0}| + \frac{3}{5}|E_{1}| + |E_{2}|$ 
components that contain a vertex whose degree in $T$ is at most $2$.
Associate one vertex $v$ of $I$ with each of these components such that
the component contains substantial vertices of $F_v$.
For any pair of these associated vertices, there is an edge $e$ of $E_*$
such that the vertices form an $e$-enclosing pair. By Lemma~\ref{l:disc}
these vertices are in different components of $G - C$. We obtain
$c(G - C) > \frac25\size{E_0} + \frac35\size{E_1} + \size{E_2}$.

We continue by bounding $c(G - S)$.
For every vertex $v_e \in X'$, there is a corresponding edge 
$e \in E'$ and the overspan graph $A_e$.
Let $d$, $d'$ denote the edges adjacent to $e$ in $T$. 
Let us consider the graph $A_d$.
(The argument for $A_{d'}$ is symmetric.)
By the definition of $E'$, we have $A_d \not \in \BB$.
Due to the maximality of $\BB$ the set $C$ is not
a vertex cover of the graph $G_{\BB \cup \{A_d\} }$.
Thus, the graph $A_d$ contains an edge $e_0$ (a simple edge or a loop)
such that no vertex incident with this edge is in $C$.
In $T$, the edges $d$ and $e$ have a common substantial vertex, say $t$.
Choose a vertex $u$ of $G$ such that $t \in V(F_u)$ and $u$ is incident
with $e_0$ in $A_d$.
Since $t \in V(F_{v_e})$, the vertices $u$ and $v_e$ are adjacent in $G$.
Observe that $u \not \in C \cup I$. 
Similarly, there is a substantial vertex $t'$ and a vertex
$u' \in V(G)\sm (C \cup I)$ such that
$t' \in V(F_{u'})$ and $t' \in V(F_{v_e})$. The vertices $u$ and $u'$ form
an $e$-enclosing pair.
The three vertices $u, v_e, u'$ are in the same component of
the graph $G - C$. 
By Lemma~\ref{l:disc}, removing the vertex $v_e$ disconnects this
component into two components such that $u$ is in one of them 
and $u'$ is in the other.  
Removing the vertices of $X'$ from $G - C$
increases the number of components by $\size{X'}$.
Therefore we obtain: 
\begin{equation}\label{eq:2}
  c(G - S) > \frac25\size{E_0} + \frac35\size{E_1} + \size{E_2} + \size{E'}.
\end{equation}

Comparing~\eqref{eq:1} and \eqref{eq:2}, we find that $G$
is not $10$-tough. We obtain a contradiction proving 
Theorem~\ref{t:main}.
\end{proof}

We remark that the bound of Theorem~\ref{t:main} is
still far from the lower bound of `almost' $\frac74$ proven
in~\cite{74}, and there seems to be ample room for further
improvements.


\section{Toughness and Hamilton-connectedness}
\label{sec:hc}

With a little extra work, one can use the method of this paper to
obtain a slightly stronger result than Theorem~\ref{t:main}, namely
that any $10$-tough chordal graph $G$ is
Hamilton-connected. (Recall that this means that for any two vertices
$u,v$ of $G$, there is a Hamilton path from $u$ to $v$.)

Assume that the vertices $u$ and $v$ are given. Let us sketch the main
modifications required to show that $G$ admits a Hamilton path from
$u$ to $v$:
\begin{itemize}
\item in Lemma~\ref{l:ham}, we additionally assume that the matching
  chosen from the graphs in $\AAA(G)$ is incident with neither $u$ nor
  $v$, 
\item in the proof of Lemma~\ref{l:ham}, the Euler tour is replaced by
  a trail from a vertex of $F_u$ to a vertex of $F_v$ spanning all the
  vertices of $T$, 
\item to find a matching as above, it is sufficient to increase the bound
  on the matching number of the graph $G_\BB$ in Lemma~\ref{l:match
    ham} by two, to $2\size{\BB}$.
\end{itemize}
By inspecting inequality~\eqref{eq:1}, one can see that the proof of
Theorem~\ref{t:main} works just the same even with the strengthened
assumption in Lemma~\ref{l:match ham}. 

We hope that the interested reader will be able to reconstruct the
argument from this account.

\section*{Acknowledgement}
We thank the anonymous referees for their helpful remarks and suggestions.


\let\OLDthebibliography\thebibliography
\renewcommand\thebibliography[1]{
  \OLDthebibliography{#1}
  \setlength{\parskip}{0pt}
  \setlength{\itemsep}{4pt plus 0.3ex}
}

\end{document}